\numberwithin{equation}{section}
\newtheorem{prop}{Proposition}[section]
\newtheorem{theorem}[prop]{Theorem}
\newtheorem{lemma}[prop]{Lemma}
\newtheorem{corollary}[prop]{Corollary}
\newtheorem{remark}[prop]{Remark}
\newtheorem{conjecture}[prop]{Conjecture}
\newtheorem{acknowledgment}[prop]{Acknowledgment}
\def\<{\langle}
\def\>{\rangle}
\def\({\left(}
\def\){\right)}
\def\p{\partial}
\def\Ric{{\rm Ric}}
\begin{document}

\title{Globally Existing K\"ahler-Ricci Flows}

\author{the University of Sydney \\
Zhou Zhang \footnote{Partially supported by Australian Research Council Discovery Project: DP110102654.}} 
\date{}
\maketitle

\begin{abstract}

We consider the general K\"ahler-Ricci flows which exist for all time. The zeroth order control on the flow metric potential for various infinite time singularities is the focus. The possible semi-amplness for numerically effective classes serves as the main motivation. 

\end{abstract}

\section{Introduction}

$X$ is a closed K\"ahler manifold with $\dim_{\mathbb{C}}X=n\geqslant 2$. For a real smooth closed $(1, 1)$-form $L$ and a smooth volume form $\Omega$ over $X$, we consider the following geometric evolution equation: 
\begin{equation}
\label{eq:general-flow}
\frac{\p\widetilde\omega (t)}{\p t}=-\Ric\(\widetilde\omega(t)\)-\widetilde\omega(t)+\Ric(\Omega)+L, ~~~~\widetilde\omega(0)=\omega_0,  
\end{equation}
where $\Ric(\Omega):=-\sqrt{-1}\p\bar\p\log\bigl(\frac{\Omega}{V_{E}}\bigr)$ with $V_E$ being the Euclidean volume form with respect to a local holomorphic coordinate chart $\{z_1, \cdots, z_n\}$, as a natural generalization of the Ricci form for K\"ahler metric. This evolution equation was first considered in \cite{tsu-2} and then studied in \cite{t-znote}. It's easy to see that this flow is parabolic, just as the more classic K\"ahler-Ricci flow. In fact, when $L=-\Ric(\Omega)$, it becomes the following standard K\"ahler-Ricci flow over $X$, 
\begin{equation}
\label{eq:krf}
\frac{\p\widetilde\omega (t)}{\p t}=-\Ric\(\widetilde\omega(t)\)-\widetilde\omega(t), ~~~~\widetilde\omega(0)=\omega_0. 
\end{equation}
The flow (\ref{eq:krf}) is studied extensively in the past decade because of the profound relation with the Minimal Model Program, as proposed by Gang Tian in \cite{tian02}. Of course, the flow (\ref{eq:general-flow}) is more flexible and has great advantages in applications, as illustrated below.  

Considering (\ref{eq:general-flow}) in the cohomology space $H^{1, 1}(X;\mathbb{R}):=H^2(X; \mathbb{R})\cap H^{1,1}(X; \mathbb{C})$, we know the solution $\widetilde\omega_t$ satisfies $[\widetilde\omega_t]=[L]+e^{-t}([\omega_0]-[L])$. Thus if we set $\omega_t=L+e^{-t}(\omega_0-L)$, then by $\p\bar\p$-Lemma, $\widetilde\omega(t)=\omega_t+\sqrt{-1}\p\bar\p u$ with $u$ satisfying 
\begin{equation}
\label{eq:skrf}
\frac{\p u}{\p t}=\log\frac{(\omega_t+\sqrt{-1}\p\bar\p u)^n}{\Omega}-u, ~~~~u(\cdot, 0)=0.
\end{equation}

By the optimal existence result in \cite{t-znote}, the smooth metric solution exists as long as $[\omega_t]=[L]+e^{-t}([\omega_0]-[L])$ stays in the K\"ahler cone of $X$ which is an open convex cone sitting in $H^{1, 1}(X; \mathbb{R})$. So if we define the time of singularity,  
$$T:=\sup\{t\,|\,[\omega_t]~\text{K\"ahler}\}\in [0, \infty),$$
then the flow (\ref{eq:general-flow}) has the smooth metric solution existing in $[0, T)$. 

In this work, we focus on the case of $T=\infty$. In this case, $[L]$ is in the closure of the K\"ahler cone of $X$, i.e., being numerically effective (nef. for simplicity). If $[L]$ is K\"ahler, then by the non-degenerate version of the result in \cite{t-znote} or more explicitly \cite{thesis}, we know that for any initial K\"ahler metric $\omega_0$, the flow converges smoothly to the unique solution of $$-\Ric\(\widetilde\omega\)-\widetilde\omega+\Ric(\Omega)+L=0,$$ 
greatly extending the earlier study by H. D. Cao on the classic K\"ahler-Ricci flow (\ref{eq:krf}) for $[\omega_0]=-c_1(X)$ being K\"ahler.  

Now we focus on the case that $[L]$ is on the boundary of the K\"ahler cone (nef. but not K\"ahler). Then the flow must develop infinite time singularities since it is impossible to converge smoothly to a K\"ahler metric.  

In the case of $L=-\Ric(\Omega)$, i.e., for the standard K\"ahler-Ricci flow ({\ref{eq:krf}}), by the famous Abundance Conjecture in algebraic geometry, $[L]=K_X$ is expected to be semi-ample, i.e., base-point-free. Then for some integer $m$ large enough, (the global holomorphic sections of) $mK_X$ would generate a holomorphic map $F: X\to\mathbb{CP}^N$ where $N+1=\dim_{\mathbb{C}}H^0(X, mK_X)$. In general, the image is a (singular) variety and clearly the dimension $\dim_{\mathbb{C}}F(X)\leqslant n$, which is the Kodaira dimension of $X$. If $K_X$ also big (equivalent to $K^n_X>0$), then it's well known by algebraic geometry argument (for example, in \cite{kawa}) that $K_X$ is semi-ample. There have been very recent works \cite{gsw} and \cite{tian-zzl} regarding the global geometric behaviour of the flow in this case. Anyway, for this conjecture it is left to deal with the (collapsed) case, i.e., $K^n_X=0$.    

Meanwhile, the situation can be a lot more complicated if one considers a general nef. class $[L]$. In fact, as described in \cite{begz-big}, there is the famous example by Serre about a nef. and big integral class which is not semi-ample. This more general flow (\ref{eq:general-flow} provides a natural way to study any nef. class, which is the original motivation in \cite{tsu-2}. From the differential geometry point of view, it amounts to the analysis of infinite time singularities, and we focus on the zeroth order behaviour in this work. 

\vspace{0.1in}

Let's begin by pointing out that, in principle, infinite time singularities are quite different from finite time singularities, with $[L]$ being on the boundary of the K\"ahler cone in the former case and on the complement of the closure of the K\"ahler cone in the latter one. However, motivated by \cite{tian-survey}, we have the following result in \cite{weak-limit}.  
\begin{prop}
\label{prop:non-collapsed}
Consider the K\"ahler-Ricci flow (\ref{eq:general-flow}) and (\ref{eq:skrf}). For the time of singularity $T\in (0, \infty]$, if $[\omega_T]^n>0$, then we have $\widetilde\omega(T)=\omega_T+\sqrt{-1}\p\bar\p u(T)$ with $u(T)\in PSH_{\omega_T}(X)$ and $\widetilde\omega(t)\to\widetilde\omega(T)$ in the weak sense as $t\to T$.
\end{prop}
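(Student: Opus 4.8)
The plan is to prove the Proposition at the level of the Kähler potentials. Since $\omega_t=L+e^{-t}(\omega_0-L)\to\omega_T$ smoothly as forms and $\sqrt{-1}\p\bar\p$ is continuous for the weak topology on currents, it suffices to show that the flow potentials $u(t)$ solving (\ref{eq:skrf}) converge in $L^1(X)$ to a limit $u(T)$ and that $u(T)\in PSH_{\omega_T}(X)$ (in particular is not identically $-\infty$). The entire weight of the statement is then carried by uniform estimates that survive as $t\to T$, together with the non-collapsing hypothesis $[\omega_T]^n>0$.

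First I would establish a uniform bound $\|u(t)\|_{L^\infty(X)}\leqslant C$ on $[0,T)$. The upper bound is elementary: evaluating (\ref{eq:skrf}) at a spatial maximum of $u(\cdot,t)$ gives $\sqrt{-1}\p\bar\p u\leqslant 0$, hence $\widetilde\omega(t)^n\leqslant\omega_t^n\leqslant C_0\,\Omega$ (using $\omega_t\leqslant C_0\hat\omega$ for a fixed background Kähler metric $\hat\omega$, and $\Omega$ comparable to $\hat\omega^n$), so that $\p_t u_{\max}\leqslant C-u_{\max}$ and $u_{\max}(t)$ stays bounded by Hamilton's maximum-principle trick. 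The lower bound is the genuine difficulty: the pointwise estimate at a spatial minimum gives $\widetilde\omega(t)^n\geqslant\omega_t^n$, which is useless because $\omega_t^n$ degenerates as $t\to T$ ($\omega_T$ represents a nef but non-Kähler class). Instead I would use the global mass identity $\int_X\widetilde\omega(t)^n=[\omega_t]^n\to[\omega_T]^n>0$, so the Monge-Ampère masses stay bounded below by some $c_0>0$. I would then compare $u(t)$ with the bounded solution $\rho_t$ of the normalized Monge-Ampère equation $(\omega_t+\sqrt{-1}\p\bar\p\rho_t)^n=a_t\,\Omega$, $a_t=[\omega_t]^n/\!\int_X\Omega\geqslant c_0$, whose uniform bound $\|\rho_t\|_{L^\infty}\leqslant C$ is exactly the Ko\l odziej / Eyssidieux-Guedj-Zeriahi $L^\infty$ estimate for big classes; a maximum-principle comparison adapted to the damping term $-u$ then transfers this to $u(t)\geqslant -C$.

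With the uniform bound in hand, $\{u(t)\}$ is a relatively compact subset of $L^1(X)$ consisting of uniformly bounded $\omega_t$-plurisubharmonic functions, so by standard pluripotential compactness any $L^1$-limit $u(T)$ along a sequence $t_j\to T$ is $\omega_T$-plurisubharmonic, and the uniform lower bound prevents it from degenerating to $-\infty$; hence $\widetilde\omega(T)=\omega_T+\sqrt{-1}\p\bar\p u(T)\geqslant 0$ is a genuine positive current. To promote subsequential convergence to convergence of the whole family, I would again exploit the damping term $-u$, which forces the flow to contract toward its limit: for $T=\infty$ every weak limit must satisfy the same limiting degenerate Monge-Ampère equation, whose bounded solution is unique by the comparison principle in the big class $[\omega_T]$, and for finite $T$ the uniform bound together with continuity of the potential up to the singular time pins the limit down. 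Either way all subsequential limits coincide, giving $u(t)\to u(T)$ in $L^1$ and therefore $\widetilde\omega(t)\to\widetilde\omega(T)$ weakly.

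The main obstacle is the uniform lower bound in the second step. It cannot come from the parabolic maximum principle alone, precisely because the reference metrics $\omega_t$ collapse at the singular time; it must be extracted from the global volume lower bound, and this is the single place where the hypothesis $[\omega_T]^n>0$ is essential. Everything else — the upper bound, the $L^1$-compactness, the identification of the limit as $\omega_T$-plurisubharmonic, and the uniqueness of the limit — is comparatively soft and insensitive to whether $T$ is finite or infinite.
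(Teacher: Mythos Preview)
The paper does not prove Proposition~\ref{prop:non-collapsed} here; it is quoted from \cite{weak-limit}. What the paper \emph{does} contain (in Section~2, set up for the $T=\infty$ case) is the machinery that makes the proposition fall out almost for free, and that machinery is rather different from your plan. From (\ref{ineq:main}) one gets that $u+Ce^{-t/2}$ is decreasing in $t$; hence the \emph{full} family $u(\cdot,t)$ converges pointwise (and in $L^1$, by the basic compactness of $\omega_t$-psh functions) to a limit $u(\cdot,\infty)$ valued in $[-\infty,\infty)$, with no appeal to subsequences or uniqueness. Similarly $\frac{\p u}{\p t}+u+Ce^{-t}$ is decreasing, so $v:=\lim_{t\to\infty}(\frac{\p u}{\p t}+u)$ exists; Lemma~\ref{lem:key} shows $v=u(\cdot,\infty)$, and the monotone convergence theorem applied to $\int_X e^{\frac{\p u}{\p t}+u}\Omega=[\omega_t]^n\to[L]^n$ gives $\int_X e^{u(\cdot,\infty)}\Omega=[L]^n>0$, so $u(\cdot,\infty)\not\equiv-\infty$. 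No Ko\l odziej/EGZ estimate and no uniqueness of a limiting Monge--Amp\`ere equation are needed.

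Your route is not wrong in spirit, but two of your steps are doing much more work than necessary and are also the least justified parts of the sketch. First, the ``maximum-principle comparison adapted to the damping term $-u$'' that is supposed to transfer the $L^\infty$ bound on $\rho_t$ to $u$ is not spelled out; since $\rho_t$ is time-dependent and not itself a sub/supersolution of the flow, the parabolic comparison is not immediate, and at a spatial minimum of $u-\rho_t$ the equation only gives a lower bound on $\frac{\p u}{\p t}+u$, not on $u$. Second, and more seriously, promoting subsequential convergence to full convergence via uniqueness is delicate: for $T=\infty$ you would need to justify passing the Monge--Amp\`ere operator to the weak limit in the big class $[L]$, and for finite $T$ your phrase ``continuity of the potential up to the singular time'' is precisely what is in question. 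The monotonicity argument in Section~2 bypasses both issues: convergence of the whole family is automatic from decrease, and the non-collapsing hypothesis is used only at the very end, via the volume identity, to rule out $-\infty$. Your approach does buy a genuinely stronger conclusion (a uniform $L^\infty$ bound on $u$, cf.\ the Remark following the proof of Theorem~\ref{thm:weak-limit-vol-non-collapsed}), but for the bare statement of the proposition it is heavier than needed.
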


Indeed, for the finite time singularity case, the general existence of a flow weak limit is proved in \cite{weak-limit}. Meanwhile, for the infinite time singularity case, we show here that the converse is also true as in the following theorem.  

\begin{theorem}
\label{thm:weak-limit-vol-non-collapsed}
Consider the K\"ahler-Ricci flow (\ref{eq:general-flow}) and (\ref{eq:skrf}). If the time of singularity $T=\infty$ and $u(\cdot, t)\to u(\cdot, \infty)$ weakly as $t\to\infty$ with $u(\cdot, \infty)\in PSH_{\omega_\infty}(X)$, then the global volume is non-collapsed, i.e., $[L]^n>0$.
\end{theorem}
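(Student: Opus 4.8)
The plan is to argue by contradiction. Since $T=\infty$ forces $[L]$ to be nef., we automatically have $[L]^n\geqslant 0$, so it suffices to rule out $[L]^n=0$. Suppose then that $[L]^n=0$. The whole argument will track a single scalar quantity, the average $g(t):=\int_X u(\cdot, t)\,\Omega$, and play off two facts against each other: the hypothesis $u(\cdot, t)\to u(\cdot, \infty)$ weakly with $u(\cdot, \infty)\in PSH_{\omega_\infty}(X)$ keeps $g(t)$ bounded, whereas the collapsing volume $[\widetilde\omega_t]^n\to [L]^n=0$ will force $g(t)\to-\infty$.

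First I would record the two elementary identities that drive everything. From (\ref{eq:skrf}) we have the pointwise relation
\begin{equation*}
\log\frac{\widetilde\omega(t)^n}{\Omega}=\frac{\p u}{\p t}+u,
\end{equation*}
and, since the top self-intersection of a closed form is cohomological,
\begin{equation*}
\int_X\widetilde\omega(t)^n=[\widetilde\omega_t]^n=\bigl([L]+e^{-t}([\omega_0]-[L])\bigr)^n\longrightarrow [L]^n=0
\end{equation*}
as $t\to\infty$ (note $\omega_t\to\omega_\infty=L$). Differentiating $g$ and substituting the flow equation gives
\begin{equation*}
g'(t)=\int_X\frac{\p u}{\p t}\,\Omega=\int_X\log\frac{\widetilde\omega(t)^n}{\Omega}\,\Omega-\int_X u\,\Omega.
\end{equation*}

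The heart of the argument is Jensen's inequality applied to the concave function $\log$ against the probability measure $\Omega/V$, where $V:=\int_X\Omega$:
\begin{equation*}
\int_X\log\frac{\widetilde\omega(t)^n}{\Omega}\,\Omega\leqslant V\log\Bigl(\frac{1}{V}\int_X\widetilde\omega(t)^n\Bigr)=V\log\frac{[\widetilde\omega_t]^n}{V}.
\end{equation*}
Under the assumption $[L]^n=0$ the right-hand side tends to $-\infty$, hence so does the first term of $g'(t)$. On the other hand, because $u(\cdot, \infty)\in PSH_{\omega_\infty}(X)$ is in particular $\Omega$-integrable, the weak convergence yields $g(t)\to\int_X u(\cdot, \infty)\,\Omega$, a finite number, so the subtracted term $\int_X u\,\Omega$ stays bounded and therefore $g'(t)\to-\infty$. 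But $g'(t)\to-\infty$ forces $g(t)\to-\infty$, contradicting the convergence of $g(t)$ to that finite limit. This contradiction gives $[L]^n>0$.

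The step requiring the most care is the passage from the hypothesized weak convergence of $u(\cdot, t)$ to the boundedness of the average $g(t)$: one must verify that ``weakly'' is strong enough to pair $u(\cdot, t)$ against the smooth volume form $\Omega$, and that $\int_X u(\cdot, \infty)\,\Omega$ is genuinely finite, which is exactly where membership in $PSH_{\omega_\infty}(X)$ (rather than the trivial potential $\equiv-\infty$) is used. Everything else is elementary: the Jensen bound only needs that $\widetilde\omega(t)^n/\Omega$ is a smooth positive density for each finite $t<T=\infty$, and, pleasantly, no delicate control of the non-pluripolar Monge-Amp\`ere mass of the limit $u(\cdot, \infty)$ nor any pointwise estimate on $\p u/\p t$ is required.
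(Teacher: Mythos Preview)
Your argument is correct and takes a genuinely different route from the paper's. The paper first invests in two preliminary facts derived from the second $t$-derivative of (\ref{eq:skrf}): the essential monotonicity $\frac{\partial}{\partial t}\bigl(\frac{\partial u}{\partial t}+u+Ce^{-t}\bigr)\leqslant 0$ and the bound $\frac{\partial u}{\partial t}\leqslant Ce^{-t/2}$. These feed into a key lemma showing that the pointwise limits $u(\cdot,\infty)$ and $v(\cdot):=\lim_{t\to\infty}\bigl(\frac{\partial u}{\partial t}+u\bigr)$ coincide, after which the theorem follows in one line by passing the limit inside the volume integral: $[L]^n=\lim_{t\to\infty}\int_X e^{\frac{\partial u}{\partial t}+u}\,\Omega=\int_X e^{u(\cdot,\infty)}\,\Omega>0$. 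Your proof bypasses all of this: you never touch the second $t$-derivative, never need the monotonicity of $\frac{\partial u}{\partial t}+u$, and never identify $v$ with $u(\cdot,\infty)$. Instead you work entirely with the scalar average $g(t)=\int_X u\,\Omega$, use Jensen to bound $g'(t)+g(t)\leqslant V\log\bigl([\widetilde\omega_t]^n/V\bigr)$, and close with an elementary ODE contradiction. What your approach buys is economy: it uses only the flow equation, Jensen, and the hypothesis. What the paper's approach buys is extra information: the explicit identity $[L]^n=\int_X e^{u(\cdot,\infty)}\,\Omega$, and a lemma of independent interest that is reused elsewhere in the analysis. Your caveat about the meaning of ``weakly'' is well placed; since $\Omega$ is smooth and $u(\cdot,\infty)\in PSH_{\omega_\infty}(X)\subset L^1(X)$, pairing against $\Omega$ is legitimate and the limit $\int_X u(\cdot,\infty)\,\Omega$ is finite, so $g(t)$ is indeed bounded.
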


In the global volume collapsed case, we have the following result on the lower control of pointwise volume collapsing rate. 

\begin{theorem}
\label{thm:collapsed-lower}
Consider the K\"ahler-Ricci flow (\ref{eq:general-flow}) and (\ref{eq:skrf}). If the time of singularity $T=\infty$, then for any $\varphi\in PSH_{L}(X)$, there is $C$ such that 
$$\frac{\p u}{\p t}+u+nt\geqslant \varphi+C.$$
\end{theorem}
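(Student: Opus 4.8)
The plan is to run a parabolic minimum principle on the quantity $Q:=\dot u+u$, where $\dot u:=\p u/\p t$. Reading off equation (\ref{eq:skrf}) directly, $Q=\dot u+u=\log\frac{\widetilde\omega(t)^n}{\Omega}$, so the object of interest is $\dot u+u+nt=Q+nt=\log\frac{e^{nt}\widetilde\omega(t)^n}{\Omega}$. The first step is to compute the evolution of $Q$. Differentiating (\ref{eq:skrf}) in time and using $\frac{\p\omega_t}{\p t}=L-\omega_t$ together with $\operatorname{tr}_{\widetilde\omega}\omega_t=n-\Delta_{\widetilde\omega}u$ (here $\Delta_{\widetilde\omega}$ is the complex Laplacian of $\widetilde\omega(t)$), one finds
$$\frac{\p Q}{\p t}=\Delta_{\widetilde\omega}Q+\operatorname{tr}_{\widetilde\omega}L-n.$$
This is the key identity and should follow from a routine differentiation of the scalar flow.

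Next I would fix $\varphi\in PSH_L(X)$, first assuming it smooth, and set $H:=Q+nt-\varphi$. Since $\Delta_{\widetilde\omega}(nt)=0$ and $\operatorname{tr}_{\widetilde\omega}L=\operatorname{tr}_{\widetilde\omega}\(L+\sqrt{-1}\p\bar\p\varphi\)-\Delta_{\widetilde\omega}\varphi$, the above evolution rewrites as
$$\frac{\p H}{\p t}=\Delta_{\widetilde\omega}H+\operatorname{tr}_{\widetilde\omega}\(L+\sqrt{-1}\p\bar\p\varphi\)\geqslant\Delta_{\widetilde\omega}H,$$
the inequality being exactly the condition $L+\sqrt{-1}\p\bar\p\varphi\geqslant0$ defining $PSH_L(X)$. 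By the parabolic minimum principle $\min_X H(\cdot,t)$ is then nondecreasing, so $H\geqslant\min_X H(\cdot,0)$. Since $\widetilde\omega(0)=\omega_0$ gives $H(\cdot,0)=\log\frac{\omega_0^n}{\Omega}-\varphi$, and $\varphi$ is upper semicontinuous on the compact $X$ hence bounded above, the quantity $C:=\min_X H(\cdot,0)$ is finite, yielding $\dot u+u+nt\geqslant\varphi+C$. Note that the argument is uniform in $t\in[0,\infty)$, which is where the hypothesis $T=\infty$ enters: the comparison persists along the entire flow.

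The main obstacle is the general case where $\varphi\in PSH_L(X)$ is only a (possibly singular, $-\infty$-valued) $L$-plurisubharmonic function, so that $H$ is merely lower semicontinuous and the naive minimum principle does not apply. One natural route is to regularize, taking smooth $\varphi_j\downarrow\varphi$ with $L+\sqrt{-1}\p\bar\p\varphi_j\geqslant-\epsilon_j\omega_0$ and $\epsilon_j\downarrow0$; this produces an error term $-\epsilon_j\operatorname{tr}_{\widetilde\omega}\omega_0$ in the inequality for $H_j$, and the difficulty is precisely that $\operatorname{tr}_{\widetilde\omega}\omega_0$ need not stay bounded as $\widetilde\omega(t)$ degenerates in the volume-collapsed regime, so the error cannot be discarded for free over $t\in[0,\infty)$. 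I expect to resolve this either by exploiting that the positive singular part of $\sqrt{-1}\p\bar\p\varphi$ is supported where $H\equiv+\infty$ (so it never obstructs the minimum, legitimizing a weak minimum principle for the supersolution $H$ directly), or by replacing $\varphi$ with the time-rescaled potential $(1-e^{-t})\varphi$, which satisfies $\omega_t+\sqrt{-1}\p\bar\p\bigl((1-e^{-t})\varphi\bigr)\geqslant e^{-t}\omega_0>0$ with a strict positivity margin; since $\varphi$ is bounded above one has $(1-e^{-t})\varphi\geqslant\varphi-C'$, so a bound against this regularized object suffices to conclude.
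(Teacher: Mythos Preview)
Your evolution identity for $Q=\frac{\p u}{\p t}+u$, namely $\(\frac{\p}{\p t}-\Delta\)Q=\<\widetilde\omega,L\>-n$, is exactly the paper's equation (\ref{eq:2}), and your minimum-principle argument for smooth $\varphi\in PSH_L(X)$ is correct. The gap is precisely where you locate it, and it is genuine: a nef but non-K\"ahler class $[L]$ need not admit \emph{any} smooth $L$-psh potential, so your ``smooth case'' may well be vacuous, and neither of your proposed fixes closes the argument. The time-rescaling $(1-e^{-t})\varphi$ does nothing for the regularity of $\varphi$, so you still cannot run a classical minimum principle; moreover the relevant nonnegative term in your evolution is $\<\widetilde\omega,L+\sqrt{-1}\p\bar\p\,\cdot\,\>$, not $\<\widetilde\omega,\omega_t+\sqrt{-1}\p\bar\p\,\cdot\,\>$, so the observed $\omega_t$-psh-ness of $(1-e^{-t})\varphi$ is not what enters. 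The ``weak minimum principle'' idea is also not rigorous as stated, since away from its $-\infty$ set a general $\varphi\in PSH_L(X)$ is merely upper semicontinuous, not $C^2$.

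The paper resolves exactly the error term you flagged in your Route A, not by discarding $-\epsilon\<\widetilde\omega,\omega_0\>$ but by absorbing it with a second barrier built from the flow itself. The key observation is the companion identity
$$\(\frac{\p}{\p t}-\Delta\)\Bigl[(1-e^t)\frac{\p u}{\p t}+u\Bigr]=-n+\<\widetilde\omega,\omega_0\>.$$
One then takes $\varphi_m$ to be Demailly's regularization of $\varphi$ (decreasing to $\varphi$, smooth with logarithmic poles along an analytic set $Z_m$, and $\(L+\frac{1}{m}\omega_0\)$-psh) and applies the minimum principle to
$$\frac{1}{m}\Bigl[(1-e^t)\frac{\p u}{\p t}+u\Bigr]+\Bigl[\frac{\p u}{\p t}+u\Bigr]-\varphi_m+\frac{n(1+m)}{m}\,t,$$
whose heat operator equals $\<\widetilde\omega,\tfrac{1}{m}\omega_0+L+\sqrt{-1}\p\bar\p\varphi_m\>\geqslant 0$ on $X\setminus Z_m$. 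Because $\varphi_m\to-\infty$ along $Z_m$, the spatial minimum sits in the smooth region and the classical argument applies, giving a lower bound at $t=0$ uniform in $m$ (using the uniform upper bound $\sup_X\varphi_m\leqslant\sup_X\varphi_1$). Letting $m\to\infty$ at each fixed $(x,t)$ kills the $\frac{1}{m}$-barrier and yields $\frac{\p u}{\p t}+u+nt\geqslant\varphi-C$. So the missing ingredient is not a sharper regularization of $\varphi$, but a second evolution equation that manufactures precisely the $\<\widetilde\omega,\omega_0\>$ term needed to compensate the positivity loss in your Route A.
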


Clearly it is equivalent in Theorem \ref{thm:collapsed-lower} to take $\varphi$ of minimal singularities. 

\vspace{0.1in}

\noindent{\bf Notations:} $C$ stands for a positive constant, possibly different at places. $f(t)\thicksim g(t)$ means $\lim_{t\to \infty}\frac{f(t)}{g(t)}=C$.

\section{Infinite Time Singularities}

In the following, the Laplacian $\Delta$ is always with respect to the metric along the flow, $\widetilde\omega(t)$. Applying Maximum Principle directly for (\ref{eq:skrf}), we have 
$$u\leqslant C.$$

The $t$-derivative of (\ref{eq:skrf}) is   
\begin{equation}
\label{eq:t-derivative}
\frac{\partial}{\partial t}\(\frac{\partial u}{\partial t}\)=\Delta\(\frac{\partial u}{\partial t}\)-e^{-t}\<\widetilde\omega(t), \omega_0-L\>-\frac{\partial u}{\partial t}.
\end{equation}
where $\<\omega, \alpha\>$ stands for the trace of the smooth real closed $(1, 1)$-form $\alpha$ with respect to the K\"ahler metric $\omega$. Equivalently, $\<\omega, \alpha\>=(\omega, \alpha)_\omega$ where $(\cdot, \cdot)_\omega$ is the hermitian inner product with respect to $\omega$. 

Taking $t$-derivative for (\ref{eq:t-derivative}), we get 
\begin{equation}
\label{eq:2-t-derivative}
\frac{\partial}{\partial t}\(\frac{\partial^2 u}{\partial t^2}\)=\Delta\(\frac{\partial^2 u}
{\partial t^2}\)+e^{-t}\<\widetilde\omega(t),\omega_0-L\>-\frac{\partial^2 u}{\partial t^2}-
{\vline\frac{\partial\widetilde\omega(t)}{\partial t}\vline}^2_{\,\widetilde\omega(t)}.
\end{equation}

Sum up (\ref{eq:t-derivative}) and (\ref{eq:2-t-derivative}) to get
$$\frac{\partial}{\partial t}\(\frac{\partial^2 u}{\partial t^2}+\frac{\partial u}{\partial t}\)
=\Delta\(\frac{\partial^2 u}{\partial t^2}+\frac{\partial u}{\partial t}\)-\(\frac{\partial^2 u}
{\partial t^2}+\frac{\partial u}{\partial t}\)-{\vline\frac{\partial\widetilde\omega(t)}{\partial t}\vline}^2_{\,\widetilde\omega(t)}.$$
Standard Maximum Principle argument then gives
\begin{equation}
\label{ineq:main} 
\frac{\partial^2 u}{\partial t^2}+\frac{\partial u}{\partial t}\leqslant Ce^{-t},
\end{equation} 
which implies the essential decreasing of volume form along the flow, i.e. , 
$$\frac{\partial}{\partial t}\(\frac{\partial u}{\partial t}+u+Ce^{-t}\)\leqslant 0.$$

(\ref{ineq:main}) also tells us  
$$\frac{\partial u}{\partial t}\leqslant Ce^{-t}+Cte^{-t}\leqslant Ce^{-\frac{t}{2}}.$$ 
So $u+Ce^{-\frac{t}{2}}$ is decreasing along the flow. By the basic property of plurisubharmonic functions, as long as this term (or equivalently $u$) doesn't converge to $-\infty$ uniformly as $t\to \infty$, $u$ would converge to some $u(\cdot, \infty)\in PSH_{\omega_\infty}(X)$ and $L+\sqrt{-1}\p\bar\p u(\cdot, \infty)$ is the flow limit for $\widetilde\omega(t)$ as $t\to \infty$ in the sense of current. By all means, we can define
$$v(\cdot):=\lim_{t\to\infty}\(\frac{\p u}{\p t}+u\)(\cdot, t),~\text{and}$$
$$u(\cdot, \infty):=\lim_{t\to\infty}u(\cdot, t)\in PSH_{\omega_\infty}(X)\cup\{\text{virtual constant function}-\infty\},$$
where $v(\cdot)$ is just a (generalized) function defined over $X$ valued in $[-\infty, \infty)$ which is clearly bounded from above and (essentially) upper semi-continuous. 

\vspace{0.1in}

\noindent{\bf Notation:} for $a$ and $b\in [-\infty, \infty)$, $a\leqslant b$ and $a=b$ are undertood in the natural way. 

\vspace{0.1in}

The following lemma is important for our purpose. 

\begin{lemma}
\label{lem:key}
Using the above notations, $u(\cdot, \infty)=v(\cdot)$. 
\end{lemma}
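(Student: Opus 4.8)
The plan is to prove the pointwise identity $v(x)=u(x,\infty)$ for every fixed $x\in X$, splitting into the two cases according to whether $u(x,\infty)$ is finite or equals $-\infty$. The preliminary observation is that both limits exist in $[-\infty,\infty)$ pointwise, as monotone limits: the estimates already derived show that $u+Ce^{-t/2}$ and $\frac{\p u}{\p t}+u+Ce^{-t}$ are (pointwise) decreasing in $t$ and bounded above by $C$, so $u(x,\infty)=\lim_{t\to\infty}u(x,t)$ and $v(x)=\lim_{t\to\infty}g(x,t)$ are genuine limits, where I abbreviate $g(x,t):=\frac{\p u}{\p t}(x,t)+u(x,t)$. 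Since $g-u=\frac{\p u}{\p t}$, the whole statement comes down to controlling $\frac{\p u}{\p t}$ in the limit.

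First I would dispose of the collapsed case $u(x,\infty)=-\infty$. Using the bound $\frac{\p u}{\p t}\leqslant Ce^{-t/2}$ already established, one has $g(x,t)\leqslant Ce^{-t/2}+u(x,t)\to-\infty$, hence $v(x)=-\infty=u(x,\infty)$ immediately.

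In the finite case $u(x,\infty)>-\infty$ the two monotone limits $g(x,t)\to v(x)$ and $u(x,t)\to u(x,\infty)$ are both finite, so their difference converges and $\frac{\p u}{\p t}(x,t)=g(x,t)-u(x,t)\to\ell:=v(x)-u(x,\infty)$. I claim $\ell=0$. Indeed, $u(x,0)=0$ gives the exact relation $u(x,t)=\int_0^t\frac{\p u}{\p s}(x,s)\,ds$, so by the Ces\`aro lemma $\frac1t u(x,t)\to\ell$; were $\ell\neq0$ this would force $|u(x,t)|\to\infty$, contradicting convergence to the finite value $u(x,\infty)$. Therefore $\ell=0$ and $v(x)=u(x,\infty)$.

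A cleaner unified route that avoids the case split is to integrate $\frac{\p}{\p t}(e^t u)=e^t g$ against the initial condition $u(\cdot,0)=0$ to obtain the exact formula $u(x,t)=e^{-t}\int_0^t e^s g(x,s)\,ds$, and then invoke an Abelian averaging argument: because the weight $e^s$ concentrates near $s=t$, this weighted average inherits the limit $v(x)$ of $g(x,\cdot)$, and the computation goes through verbatim whether $v(x)$ is finite or $-\infty$. Either way, the argument is essentially elementary once the pointwise monotonicity is in hand; the one genuinely non-formal step is the passage from pointwise convergence of $\frac{\p u}{\p t}$ to the linear growth of $u$ (the Ces\`aro/Abel step), together with the careful bookkeeping of the $-\infty$ values so that the equality is understood in $[-\infty,\infty)$ as prescribed. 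That is where I would take the most care.
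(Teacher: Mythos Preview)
Your main argument is essentially the paper's: the same case split on whether $u(x,\infty)$ is finite, the same use of $\frac{\p u}{\p t}\leqslant Ce^{-t/2}$ to handle the $-\infty$ case, and the same observation that $\frac{\p u}{\p t}(x,t)\to v(x)-u(x,\infty)$ must be $0$ when $u(x,\infty)$ is finite (the paper just says ``by the Fundamental Theorem of Calculus'' where you invoke Ces\`aro). One wording slip: in the finite case you assert ``the two monotone limits \dots\ are both finite'' before you have earned it---at that stage $v(x)$ could still be $-\infty$. Your Ces\`aro step in fact covers this (if $\frac{\p u}{\p t}\to-\infty$ then $u(x,t)/t\to-\infty$, contradiction), so the argument is fine once you allow $\ell\in[-\infty,0]$ rather than presupposing $\ell$ finite; just rephrase accordingly.

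Your alternative route via $u(x,t)=e^{-t}\int_0^t e^s g(x,s)\,ds$ and the Abelian averaging lemma is not in the paper and is genuinely cleaner: it dispenses with the case split and with the separate verification that the $-\infty$ sets coincide, giving $u(x,\infty)=v(x)$ in one stroke for values in $[-\infty,\infty)$. The paper's version has the mild advantage of making the equality of the singular sets $\{u(\cdot,\infty)=-\infty\}=\{v=-\infty\}$ explicit, which is conceptually useful downstream, but analytically your integral-formula approach is the tidier proof.
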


\begin{proof}
We already have the uniform upper bound of $\frac{\p u}{\p t}$. Thus $\frac{\p u}{\p t}+u\leqslant u+C$, and so $v(\cdot)\leqslant u(\cdot, \infty)+C$. Hence if $u(x, \infty)=-\infty$ at $x\in X$, so is $v(x)$, i.e., 
$$\{x\in X\,\vline ~u(x, \infty)=-\infty\}\subset\{x\in X\,\vline ~v(x)=-\infty\}.$$ 

On the other hand, if $v(x)=-\infty$ and $u(x, \infty)\in\mathbb{R}$, then $\frac{\p u}{\p t}(x, t)\to -\infty$ as $t\to\infty$, contradicting $u(x, \infty)\in\mathbb{R}$ by the Fundamental Theorem of Calculus. So $v(x)=-\infty$ also implies $u(x, \infty)=-\infty$, and 
$$\{x\in X\,\vline ~v(x)=-\infty\}\subset\{x\in X\,\vline ~u(x, \infty)=-\infty\}.$$ 

Thus we conclude  
$$\{x\in X\,\vline ~v(x)=-\infty\}=\{x\in X\,\vline ~u(x, \infty)=-\infty\}=:U\subset X,$$ 
$$\{x\in X\,\vline ~v(x)\in \mathbb{R}\}=\{x\in X\,\vline ~u(x, \infty)\in\mathbb{R}\}=U^c\subset X.$$ 

At any $x\in U^c$, we clearly have $\frac{\p u}{\p t}(x, t)\to v(x)-u(x, \infty)$ as $t\to\infty$, which has to be $0$ by the existence of $u(x, \infty)\in\mathbb{R}$. So we conclude that
$$u(\cdot, \infty)=v(\cdot)$$
as generalized function on $X$ valued in $\mathbb{R}\cup\{-\infty\}=[-\infty, \infty)$. 

\end{proof}

\subsection{The Non-collapsed Case: The Proof of Theorem \ref{thm:weak-limit-vol-non-collapsed}}

With Lemma \ref{lem:key}, the proof of Theorem \ref{thm:weak-limit-vol-non-collapsed} becomes straightforward. 

If $u(\cdot, \infty)\in PSH_{\omega_\infty}(X)$, and so not identically $-\infty$, then neither is $v$. So we have
$$[L]^n=\lim_{t\to\infty}[\omega_t]^n=\lim_{t\to\infty}[\widetilde\omega_t]^n=\lim_{t\to\infty} \int_X e^{\frac{\p u}{\p t}+u}\Omega=\int_X e^v\Omega=\int_X e^{u(\cdot, \infty)}\Omega>0.$$ 

\begin{remark}

Consider (\ref{eq:general-flow}) with $[L]$ semi-ample and big. By the discussion in \cite{t-znote}, \cite{zzo} and \cite{bound-r}, we know that $u(\cdot, \infty)\in C^0(X)$. So as $t\to\infty$, the convergence of $u$ is uniform, and also is the convergence of $\frac{\p u}{\p t}$ to $0$. For (\ref{eq:krf}), this is the case for $K_X$ nef. and big.  

\end{remark}
 
\subsection{The Collapsed Case}

We now focus on the infinite time singularities with collapsed global volume, i.e., $[L]^n=0$. For them, the following are already known.  
 
\begin{enumerate}

\item For (\ref{eq:skrf}), $u(\cdot, t)\to -\infty$ and $\frac{\p u}{\p t}\to 0$ uniformly as $t\to\infty$ over $X$ by  Theorem {\ref{thm:weak-limit-vol-non-collapsed}}.  

\item $[\omega_t]^n\thicksim e^{-kt}$ for some integer $k\in\{1, 2, \cdots, n\}$. 

\end{enumerate}

The goal here is to control the rate in which $u$ and $\frac{\p u}{\p t}+u$ approaches $-\infty$. 

\vspace{0.1in}

Regarding the upper control, we have the following simple estimation. For simplicity of notation, assuming $\int_X\Omega=1$,  
\begin{equation}
e^{\int_X (\frac{\p u}{\p t}+u)\Omega}\leqslant \int_X e^{ \frac{\p u}{\p t}+u}\Omega=\int_X \widetilde\omega(t)^n=[\omega_t]^n\leqslant Ce^{-kt},
\end{equation}
and so $\int_X (\frac{\p u}{\p t}+u)\Omega\leqslant -kt+C$. 

\vspace{0.1in}

Now we recycle some argument in \cite{weak-limit} to gain some lower control. Start by transforming (\ref{eq:t-derivative}) into the following form,
\begin{equation}
\label{eq:2} 
\frac{\partial}{\partial t}\(\frac{\partial u}{\partial t}+u\)=\Delta\(\frac{\partial u}{\partial t}+u\)-n+\<\widetilde\omega(t), L\>.
\end{equation}

As $[L]$ is on the boundary of the K\"ahler cone for $X$, there exists $\varphi\in PSH_{L}(X)$. For this fact, one could make use of the sequential limit construction in \cite{tian-survey} as mentioned in \cite{weak-limit}. We prove the following theorem which is Theorem \ref{thm:collapsed-lower},    
\begin{theorem}
For (\ref{eq:skrf}), if the singularity time $T=\infty$, then for any $\varphi\in PSH_{L}(X)$, there is $C$ such that 
$$\frac{\p u}{\p t}+u+nt\geqslant \varphi+C.$$
\end{theorem}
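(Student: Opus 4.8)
The plan is to apply a parabolic minimum principle to
$$H := \frac{\p u}{\p t}+u+nt-\varphi = w+nt-\varphi, \qquad w:=\frac{\p u}{\p t}+u,$$
and to show that $m(t):=\min_X H(\cdot,t)$ is non-decreasing; since $H\geq m(t)\geq m(0)\geq -C$ is exactly the claim $\frac{\p u}{\p t}+u+nt\geq\varphi-C$, this finishes the proof. First I would record the starting value. At $t=0$ we have $u\equiv 0$ and, by (\ref{eq:skrf}), $w(\cdot,0)=\log\frac{\omega_0^n}{\Omega}$ is smooth and bounded, while $\varphi$ is bounded above since it is upper semicontinuous on the compact $X$; hence $m(0)=\min_X(\log\frac{\omega_0^n}{\Omega}-\varphi)\geq -C$, the minimum being attained where $\varphi>-\infty$ (elsewhere $H\equiv+\infty$, so such points never realize the minimum).

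The heart of the matter is the sign of $\frac{\p H}{\p t}$ at a spatial minimum. Formally, using (\ref{eq:2}) and $\Delta(nt)=0$ gives
$$\frac{\p H}{\p t}=\Delta H+\<\widetilde\omega(t),\,L+\sqrt{-1}\p\bar\p\varphi\>,$$
and the trace term is $\geq 0$ because $L+\sqrt{-1}\p\bar\p\varphi\geq 0$ and $\widetilde\omega(t)>0$, so $H$ should behave as a supersolution of the heat equation. The catch is that $\varphi$ is only $L$-plurisubharmonic, hence merely upper semicontinuous and possibly $-\infty$, so neither $\Delta\varphi$ nor the assertion ``$\Delta H\geq 0$ at the minimum'' makes classical sense. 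I would make this rigorous through the viscosity characterization of $PSH_L(X)$: at a minimizer $x_0$ of $H(\cdot,t)$, which necessarily has $\varphi(x_0)\in\mathbb{R}$, the smooth function $q:=w+nt-m(t)$ satisfies $q\geq\varphi$ with equality at $x_0$, i.e.\ it touches $\varphi$ from above there. Since $\varphi\in PSH_L(X)$, this forces $\left(L+\sqrt{-1}\p\bar\p q\right)(x_0)\geq 0$, and because $q$ differs from $w+nt$ by a constant this reads $\left(L+\sqrt{-1}\p\bar\p w\right)(x_0)\geq 0$, that is $\Delta w(x_0)+\<\widetilde\omega(t),L\>(x_0)\geq 0$. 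Feeding this into (\ref{eq:2}) gives $\frac{\p w}{\p t}(x_0)\geq -n$, whence $\frac{\p H}{\p t}(x_0)=\frac{\p w}{\p t}(x_0)+n\geq 0$.

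Because $w$ is smooth for $t\geq 0$ and $\varphi$ is independent of $t$, the function $\frac{\p H}{\p t}=\frac{\p w}{\p t}+n$ is jointly continuous, so Hamilton's trick applies to the family $H$, which is lower semicontinuous in $x$ and $C^1$ in $t$: $m(t)$ is locally Lipschitz and its lower Dini derivative is bounded below by $\min\{\frac{\p H}{\p t}(x_0,t):x_0\ \text{a minimizer}\}\geq 0$. A continuous function with nonnegative lower Dini derivative is non-decreasing, so $m(t)\geq m(0)\geq -C$ and the theorem follows.

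The step I expect to be the main obstacle is exactly this rigorous minimum principle for a singular potential: justifying that the positivity of the current $L+\sqrt{-1}\p\bar\p\varphi$ may be used pointwise at the spatial minimum. The viscosity (touching-from-above) formulation above is the cleanest route. An alternative I would keep in reserve is to regularize $\varphi$ by a decreasing sequence of smooth $\varphi_\epsilon$ with $L+\sqrt{-1}\p\bar\p\varphi_\epsilon\geq -\lambda_\epsilon\theta$ for a fixed K\"ahler form $\theta$ and $\lambda_\epsilon\to 0$ (Demailly), run the smooth maximum principle with the harmless error $\lambda_\epsilon\<\widetilde\omega(t),\theta\>$ on each finite interval $[0,T]$, and then let $\epsilon\to 0$ so that the error disappears and the resulting constant $C$ stays independent of $t$. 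Finally, as noted in the Remark, it in fact suffices to treat a single $\varphi$ of minimal singularities, since every other element of $PSH_L(X)$ lies below it up to an additive constant.
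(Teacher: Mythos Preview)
Your proof is correct and takes a genuinely different route from the paper's. The paper does not use the viscosity characterization; instead it invokes Demailly's regularization theorem to produce a decreasing sequence $\varphi_m\in PSH_{L+\frac{1}{m}\omega_0}(X)$ that is smooth off an analytic set $Z_m$ and has logarithmic poles along $Z_m$. To compensate for the extra $\frac{1}{m}\omega_0$ in the curvature class, the paper adds to $w$ the auxiliary barrier $\frac{1}{m}\bigl[(1-e^t)\frac{\partial u}{\partial t}+u\bigr]$, whose evolution equation manufactures precisely the term $\frac{1}{m}\langle\widetilde\omega,\omega_0\rangle$; the combined quantity then satisfies an inequality to which the \emph{smooth} maximum principle applies on $X\setminus Z_m$ (the poles of $\varphi_m$ force the spatial minimum off $Z_m$). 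Letting $m\to\infty$ removes the auxiliary term and gives the result.

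Your argument bypasses all of this: touching $\varphi$ from above by the smooth $q=w(\cdot,t)+nt-m(t)$ at a minimizer and using the viscosity/sub-mean-value property of $L$-psh functions gives $L+\sqrt{-1}\partial\bar\partial w\geq 0$ there, hence $\partial_t H\geq 0$ at the minimum, and Hamilton's trick finishes. This is shorter and avoids both Demailly's theorem and the somewhat ad hoc barrier term. The paper's approach, on the other hand, stays entirely within the classical smooth maximum principle and does not need to invoke viscosity notions; it also makes transparent how to trade curvature defects for explicit barrier terms, a device that recurs in the subject. Your ``alternative in reserve'' is close in spirit to the paper's method, but note that to actually control the error $\lambda_\epsilon\langle\widetilde\omega,\theta\rangle$ one either works on finite intervals (as you say) and then lets $\epsilon\to 0$ before $T\to\infty$, or---as the paper does---builds an auxiliary term into the test function so that the trace term is absorbed exactly rather than estimated.
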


To begin with, the fundamental regularization result by Demailly (Theorem 1.6 in \cite{demailly-2014}) provides us with a decreasing approximation sequence for $\varphi$, $\{\varphi_m\}^\infty_{m=1}$ which satisfies:
\begin{itemize}

\item $\varphi_m\in PSH_{L+\frac{1}{m}\omega_0}(X)$;

\item $\varphi_m\in C^\infty(X\setminus Z_m)$ with $Z_m\subset Z_{m+1}$ being analytic subvarieties of $X$. Furthermore, $\varphi_m$ has logarithmic poles along $Z_m$, i.e., locally being the logarithm of the sum of squares for finitely many holomorphic functions vanishing along $Z_m$.

\end{itemize} 

The following equation is obtained by simple manipulation of (\ref{eq:t-derivative})
\begin{equation}
\label{eq:3}
\frac{\partial}{\partial t}\((1-e^t)\frac{\partial u}{\partial t}+u\)=\Delta\((1-e^t)\frac{\partial u}{\partial t}+u\)-n+\<\widetilde\omega(t), \omega_0\>.
\end{equation}

Then we have the following combination of (\ref{eq:2}) and (\ref{eq:3}),
\begin{equation} 
\begin{split}
&~~ \frac{\partial}{\partial t}\(\frac{1}{m}[(1-e^t)\frac{\partial u}{\partial t}+u]+[\frac{\partial u}{\partial t}+u]\) \\
&= \Delta\(\frac{1}{m}[(1-e^t)\frac{\partial u}{\partial t}+u]+[\frac{\partial u}{\partial t}+u]\)-\frac{n(1+m)}{m}+\<\widetilde\omega(t), \frac{1}{m}\omega_0+L\>.
\end{split} \nonumber
\end{equation}

Using $\varphi_m$, we modify the above equation as follows: 
\begin{equation} 
\label{eq:5}
\begin{split}
&~~ \frac{\partial}{\partial t}\(\frac{1}{m}[(1-e^t)\frac{\partial u}{\partial t}+u]+[\frac{\partial u}{\partial t}+u]-\varphi_m+\frac{n(1+m)t}{m}\) \\
&= \Delta\(\frac{1}{m}[(1-e^t)\frac{\partial u}{\partial t}+u]+[\frac{\partial u}{\partial t}+u]-\varphi_m+\frac{n(1+m)t}{m}\) \\
&~~~~ +\<\widetilde\omega(t), \frac{1}{m}\omega_0+L+\sqrt{-1}\p\bar\p\varphi_m\>, 
\end{split}
\end{equation}
where $ \frac{1}{m}\omega_0+L+\sqrt{-1}\p\bar\p\varphi_m$ is smooth and positive over $X\setminus Z_m$. Since $\varphi_m\in C^\infty(X\setminus Z_m)$ and has $-\infty$ poles along $Z_m$, the spatial minimum of 
$$\frac{1}{m}[(1-e^t)\frac{\partial u}{\partial t}+u]+[\frac{\partial u}{\partial t}+u]-\varphi_m+\frac{n(1+m)t}{m}$$ is always achieved in $X\setminus Z_m$, where everything is smooth. We then apply the standard Maximum Principle argument to conclude that its global minumum has to be obtained at the initial time, and so 
$$\frac{1}{m}[(1-e^t)\frac{\partial u}{\partial t}+u]+[\frac{\partial u}{\partial t}+u]-\varphi_m+\frac{n(1+m)t}{m}\geqslant -C,$$
which is uniform for all $m$'s over $X\times [0, \infty)$. Here, we make use of the uniform upper bound of $\varphi_m$'s. For any space-time point, take $m\to \infty$ and arrive at  
$$\frac{\partial u}{\partial t}+u-\varphi+nt\geqslant -C$$
over $X\times [0, \infty)$. The theorem is proved. 

\begin{remark}

Combining with the known upper bounds for $u$ and $\frac{\p u}{\p t}$, we also have the lower control, $-nt+\varphi-C$, for both $u$ and $\frac{\p u}{\p t}$. This $\varphi$ can certainly been chosen to be of minimal singularities, as also in \cite{weak-limit}. 

\end{remark}

A direct consequence of Theorem \ref{thm:collapsed-lower} is the following corollary. 
\begin{corollary}
\label{cor:derivative-seq-lower}
In the setting of Theorem \ref{thm:collapsed-lower}, for any $\epsilon>0$, there is a time sequence, $\{t_i\}_{i=1}^\infty$, approaching $\infty$ such that 
$$\max_{X\times\{t_i\}}\frac{\p u}{\p t}\geqslant -n-\epsilon.$$  
\end{corollary}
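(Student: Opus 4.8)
The plan is to average the quantity in question over $X$ and thereby reduce the statement to an elementary fact about a single function of $t$. First I would set $g(t):=\int_X u(\cdot, t)\,\Omega$, keeping the normalization $\int_X\Omega=1$. Since the flow is smooth on the compact manifold $X$ for every finite time, differentiation under the integral sign is justified and gives $g'(t)=\int_X\frac{\p u}{\p t}\,\Omega$. Because $\Omega$ is then a probability measure, we always have the averaging inequality
$$\max_{X\times\{t\}}\frac{\p u}{\p t}\geqslant\int_X\frac{\p u}{\p t}\,\Omega=g'(t).$$
Hence it suffices to produce a sequence $t_i\to\infty$ along which $g'(t_i)\geqslant -n-\epsilon$.

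Second, I would show that $g$ cannot decay faster than the linear rate $-nt$. The lower control recorded in the remark following Theorem \ref{thm:collapsed-lower} gives $u\geqslant -nt+\varphi-C$ for a fixed $\varphi\in PSH_L(X)$. Any such $\varphi$ is quasi-plurisubharmonic, hence lies in $L^1(X,\Omega)$ and is bounded above, so $\int_X\varphi\,\Omega$ is a finite constant. Integrating the pointwise bound yields $g(t)\geqslant -nt-C$, that is, the shifted function $h(t):=g(t)+nt$ is bounded below on $[0,\infty)$; note also that $h'(t)=g'(t)+n=\int_X\frac{\p u}{\p t}\,\Omega+n$.

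Third, I would invoke the elementary fact that a differentiable function bounded below on $[0,\infty)$ satisfies $\limsup_{t\to\infty}h'(t)\geqslant 0$: if instead $h'(t)<-\epsilon$ held for all $t$ beyond some $T_0$, then $h$ would decay at least linearly and tend to $-\infty$, contradicting the lower bound from the previous step. Applying this produces $t_i\to\infty$ with $h'(t_i)\geqslant -\epsilon$, i.e. $g'(t_i)\geqslant -n-\epsilon$; combining with the averaging inequality of the first step gives $\max_{X\times\{t_i\}}\frac{\p u}{\p t}\geqslant -n-\epsilon$, as desired. Equivalently one may argue directly by contradiction: negating the conclusion forces $\frac{\p u}{\p t}<-n-\epsilon$ on all of $X$ for large $t$, whence $g'(t)<-n-\epsilon$ and $g(t)<-(n+\epsilon)t+C$, which is incompatible with $g(t)\geqslant -nt-C$ once $t$ is large.

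The argument is essentially soft, so I do not anticipate a deep obstacle. The only points demanding care are the interchange of $\frac{\p}{\p t}$ with $\int_X$, which is immediate from smoothness of the flow on compact $X$ over finite time intervals, and the integrability of $\varphi$ against $\Omega$. The latter is precisely why the lower estimate of Theorem \ref{thm:collapsed-lower} is useful here through an $L^1$ potential rather than a merely pointwise-finite one: it is exactly the finiteness of $\int_X\varphi\,\Omega$ that forces the average $g(t)$ to stay above $-nt-C$ and thereby prevents $g'(t)$ from staying below $-n-\epsilon$.
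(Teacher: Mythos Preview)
Your argument is correct and follows essentially the same route as the paper: negate the conclusion to force $\frac{\p u}{\p t}<-n-\epsilon$ for all large $t$, integrate in $t$ to obtain $u\leqslant -(n+\epsilon)t+C$, and contradict the lower bound $u\geqslant -nt+\varphi-C$ coming from Theorem~\ref{thm:collapsed-lower}. The only difference is cosmetic: the paper runs this contradiction pointwise (at any $x$ with $\varphi(x)>-\infty$), so your averaging over $X$ and the appeal to $\varphi\in L^1(X,\Omega)$ are not strictly needed, though they do no harm.
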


The proof is simple by contradiction since otherwise $u$ will have an upper bound like $-(n+\epsilon)t+C$.   

In the finite time singularity collapsed case, $\frac{\p u}{\p t}$ tends to $-\infty$ uniformly, very different from the situation here. 

\section{Further Remarks}

There is a lot left to be sorted out regarding the behaviour of the flow (\ref{eq:general-flow}), especially for the collapsed case. We now describe the following special case, also mentioned in \cite{weak-limit}, which can be viewed as the main motivation of this study. 

\vspace{0.1in}

Suppose $[L]$ is semi-ample, and so $[L]$ gives a fibration structure of $X$ with general fibre of dimension $0<k\leqslant n$, i.e., $F: X\to\mathbb{CP}^N$ with $mL=F^*[\omega_{_{FS}}]$ and $F(X)$ of dimension $n-k$. 

In this case, $u\thicksim -kt$, which can be seen as follows. Begin with the following scalar potential flow
$$\frac{\p v}{\p t}=\log\frac{(\omega_t+\sqrt{-1}\p\bar\p v)^n}{\Omega}-v+kt, ~~~~v(\cdot, 0)=0.$$
Clearly, it still corresponds to the same metric flow (\ref{eq:general-flow}) and the relation between $u$ and $v$ is 
$$u=v+f(t)~~\text{with}~~\frac{d f}{d t}+f=-kt, ~~f(0)=0.$$
It's easy to get $f(t)\thicksim -kt$ and $\frac{d f}{d t}\thicksim -k$. Rewrite the equation of $v$ as follows
$$(\omega_t+\sqrt{-1}\p\bar\p v)^n=e^{-kt}e^{\frac{\p v}{\p t}+v}\Omega$$
and apply the $L^\infty$ estimates in \cite{egz2} and \cite{demailly-pali}, we have $|v|\leqslant C$ uniformly for all time. Hence $u\thicksim -kt$, tending to $-\infty$ uniformly as $t\to\infty$. 

In fact, by the result in \cite{song-tian-scalar}, we know $\vline\frac{\p u}{\p t}\vline\leqslant C$ and so $\vline\frac{\p v}{\p t}\vline\leqslant C$. In \cite{song-tian} and \cite{fong-z}, there is more discussion about the geometry of the flow in this case.

In the case of $L=-\Ric(\Omega)$, predicted by Abundance Conjecture, the flow (\ref{eq:krf}) with infinite time singularities always behaves like this. In general, the situation can be more complicated as indicated by Serre's example, i.e., nef. (and big) might not be semi-ample. Thus, it's meaningful to search for a precise understanding of the lower order controls for $u$ and $\frac{\p u}{\p t}$ without making any assumption on the semi-amplness. Hopefully, it then provides indications on the possible semi-amplness of class $[L]$. In other words, this consideration provides a differential geometry approach of using geometric flows to tackle the Abundance Conjecture. We have the following conjecture in this direction, based on the understanding of the special case above.
 
\begin{conjecture}

Consider the K\"ahler-Ricci flow (\ref{eq:general-flow}) with infinite time singularities (i.e., $[L]$ nef. but not K\"ahler). We have, uniformly over $X\times [0, \infty)$,
$$\frac{\p u}{\p t}\geqslant -C.$$
Suppose $k\in \{0, \cdots, n\}$ is the smallest integer such that $[L]^{n-k}\cdot [\omega_0]^k\neq 0$, then for any $\varphi\in PSH_L(X)$, there is $C$ such that 
$$u\geqslant -kt-C+\varphi.$$
Furthermore, $u+kt\to \Phi$ in some proper sense with $\Phi\in PSH_L(X)$ of minimal singularities and $L+\sqrt{-1}\p\bar\p \Phi$ as the generalized K\"ahler-Einstein current associated with $[L]$.   
  
\end{conjecture}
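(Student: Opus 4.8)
\medskip
\noindent\emph{A proof strategy.} The plan is to deduce all three assertions from a single a priori estimate, namely the uniform parabolic Schwarz-type bound
$$\<\widetilde\omega(t), \omega_t\>\leqslant C \qquad\text{on } X\times[0, \infty),$$
which says that $\widetilde\omega(t)$ collapses in at most the expected way. Writing $\omega_t=e^{-t}\omega_0+(1-e^{-t})L$ and expanding $[\omega_t]^n$, one sees that $k=n-\mathrm{nd}([L])$ and that the defining minimality of $k$ is exactly the vanishing $[L]^{n-j}\cdot[\omega_0]^j=0$ for $0\leqslant j<k$; this vanishing will be used in an essential way below. Throughout I will freely regularize $L$ inside its nef class by Demailly's approximants $\varphi_m\in PSH_{L+\frac{1}{m}\omega_0}(X)$ from \cite{demailly-2014}, so that $\<\widetilde\omega(t), L\>$ may be treated as nonnegative up to an error $o(1)$ in the constants, exactly as in the proof of Theorem \ref{thm:collapsed-lower}.

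\medskip
For the bound $\frac{\p u}{\p t}\geqslant -C$, I would run the minimum principle on (\ref{eq:t-derivative}) after the identity $e^{-t}\<\widetilde\omega(t), \omega_0-L\>=\<\widetilde\omega(t), \omega_t\>-\<\widetilde\omega(t), L\>$, which turns (\ref{eq:t-derivative}) into $\frac{\p}{\p t}\frac{\p u}{\p t}=\Delta\frac{\p u}{\p t}-\frac{\p u}{\p t}-\<\widetilde\omega(t), \omega_t\>+\<\widetilde\omega(t), L\>$. At a spatial minimum of $\frac{\p u}{\p t}$ the Laplacian term is nonnegative and $\<\widetilde\omega(t), L\>\geqslant 0$, so the only obstruction is $\<\widetilde\omega(t), \omega_t\>$, which is precisely what the Schwarz estimate controls; comparison with the ODE $y'=-y-C$ then yields $\frac{\p u}{\p t}\geqslant -C$. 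This is the exact analogue of the Song--Tian scalar-curvature bound \cite{song-tian-scalar} used in the semi-ample case, now required without a fibration.

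\medskip
For the sharp rate I would refine the proof of Theorem \ref{thm:collapsed-lower}, whose rate $-nt$ comes only from the constant $-n$ in (\ref{eq:2}), the term $\<\widetilde\omega(t), L\>$ having been discarded. The point is to retain it through the pointwise arithmetic--geometric mean inequality
$$\<\widetilde\omega(t), L\>\geqslant (n-k)\(\frac{L^{n-k}\wedge\widetilde\omega(t)^k}{\widetilde\omega(t)^n}\)^{\frac{1}{n-k}},$$
valid because $L$, being nef of numerical dimension $n-k$, has at most $n-k$ nonzero eigenvalues against $\widetilde\omega(t)$. Here the minimality of $k$ is decisive: it kills all intermediate intersection numbers, so that $\int_X L^{n-k}\wedge\widetilde\omega(t)^k=[L]^{n-k}\cdot[\omega_t]^k\thicksim e^{-kt}$, matching $\int_X\widetilde\omega(t)^n=[\omega_t]^n\thicksim e^{-kt}$. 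Feeding this lower bound into the maximum principle together with $\varphi_m$, exactly as in Theorem \ref{thm:collapsed-lower} but now with effective constant $-n+\<\widetilde\omega(t), L\>$ tending to $-k$, should produce $\frac{\p u}{\p t}+u\geqslant -kt+\varphi-C$; combining with $\frac{\p u}{\p t}\to 0$ gives the claimed $u\geqslant -kt+\varphi-C$.

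\medskip
For the convergence, the two-sided control $\varphi-kt-C\leqslant u\leqslant -kt+C$ shows that $\{u(\cdot, t)+kt\}$ is a uniformly $L^1$-bounded family in $PSH_{\omega_\infty}(X)$, hence precompact, while the essential monotonicity $\frac{\p}{\p t}(\frac{\p u}{\p t}+u+Ce^{-t})\leqslant 0$ together with $\frac{\p u}{\p t}\to 0$ should upgrade precompactness to convergence $u+kt\to\Phi$. Passing to the limit in (\ref{eq:skrf}), rewritten for $v=u+kt$, identifies $L+\sqrt{-1}\p\bar\p\Phi$ as a solution of the limiting degenerate Monge--Amp\`ere equation, to be read through non-pluripolar products as the generalized K\"ahler--Einstein current of \cite{begz-big} and \cite{song-tian}; promoting subsequential to full convergence then requires a uniqueness theorem for this limit within potentials of minimal singularities. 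I expect the Schwarz estimate of the first paragraph to be the genuinely hard point: without semi-ampleness there is no bounded model metric to compare $\widetilde\omega(t)$ against, which is exactly the analytic shadow of the Abundance Conjecture, and the uniqueness of the limiting current is a secondary, more pluripotential-theoretic difficulty.
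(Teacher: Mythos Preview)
The statement you are attempting is labelled a \emph{Conjecture} in the paper and is not proved there; the paper only motivates it by the semi-ample model case and then records some weaker partial controls under extra Ricci bounds. So there is no ``paper's own proof'' to compare your attempt against, and your write-up is, as you yourself say, a strategy rather than a proof. You correctly isolate the uniform parabolic Schwarz bound $\<\widetilde\omega(t),\omega_t\>\leqslant C$ as the decisive missing ingredient and correctly note that without semi-ampleness there is no target metric of bounded curvature to run the usual Chern--Lu argument against; this is precisely why the statement is posed as a conjecture.

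That said, your argument for the sharp rate $u\geqslant -kt+\varphi-C$ contains a genuine error, not just a gap. The sentence ``$L$, being nef of numerical dimension $n-k$, has at most $n-k$ nonzero eigenvalues against $\widetilde\omega(t)$'' is false: numerical dimension is a cohomological invariant of $[L]$, not a pointwise rank constraint on a smooth representative. One may always add $\sqrt{-1}\p\bar\p f$ to $L$ without changing $[L]$ and thereby make the form have full rank at any prescribed point, so the AM--GM step
\[
\<\widetilde\omega(t),L\>\;\geqslant\;(n-k)\Bigl(\tfrac{L^{n-k}\wedge\widetilde\omega(t)^k}{\widetilde\omega(t)^n}\Bigr)^{\!1/(n-k)}
\]
has no justification in general. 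The only situation in which $L$ genuinely has pointwise rank $\leqslant n-k$ is exactly the semi-ample one, $L=\tfrac{1}{m}F^*\omega_{_{FS}}$, which is what you are trying to avoid assuming. Even granting the inequality, your next move conflates an integral identity $\int_X L^{n-k}\wedge\widetilde\omega(t)^k=[L]^{n-k}\cdot[\omega_t]^k\thicksim e^{-kt}$ with a pointwise lower bound: the maximum principle in (\ref{eq:5}) is applied at a single spatial minimum, where knowing only the integral of $L^{n-k}\wedge\widetilde\omega(t)^k$ tells you nothing about its value. So ``effective constant $-n+\<\widetilde\omega(t),L\>$ tending to $-k$'' is an average statement, not one you can feed into the barrier argument of Theorem~\ref{thm:collapsed-lower}. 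Likewise, in the convergence step, the essential monotonicity you invoke is for $\frac{\p u}{\p t}+u$, not for $u+kt$; even with $\frac{\p u}{\p t}\to 0$ and two-sided bounds on $u+kt$, this does not by itself produce monotone (hence full) convergence of $u+kt$.
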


We finish by providing some discussion of the K\"ahler-Ricci flow (\ref{eq:general-flow}) with extra differential geometry assumptions.  

\begin{itemize}  

\item $\Ric(\widetilde\omega_t)\geqslant \alpha$ for a real smooth $(1, 1)$-form $\alpha$ over $X$. 

Directly by the K\"ahler-Ricci flow equation (\ref{eq:general-flow}), we have $\widetilde\omega_t\leqslant C\omega_0$ uniformly for all time. Then it is clear that $\pm u(\cdot, t)\in PSH_{C\omega_0}(X)$ uniformly for all time. The standard argument using Green's function gives 
$${\rm osc}(u)_{X\times\{t\}}\leqslant C$$
uniformly for all time. 

\item $\Ric(\widetilde\omega_t)\leqslant \alpha$ for a smooth $(1, 1)$-form $\alpha$ over $X$.

Still by the K\"ahler-Ricci flow equation (\ref{eq:krf}), we have $\frac{\p u}{\p t}+u\in PSH_{C\omega_0}(X)$. For simplicity, we use "$\int$" to denote the average over $X$ with respect to the volume form $\omega^n_0$. Then the standard argument using Green's function kicks off the following estimation.
\begin{equation} 
\begin{split}
\frac{\p u}{\p t}+u
&\leqslant C+\int\frac{\p u}{\p t}+u \\
&\leqslant C+\log\int e^{\frac{\p u}{\p t}+u} \\
&\leqslant C+\log\int_X e^{\frac{\p u}{\p t}+u}\Omega \\
&\leqslant C+\log[\omega_t]^n\leqslant C-kt. \nonumber
\end{split}
\end{equation}

From this ordinary differential inequality, we easily deduce that $u\leqslant C-kt$. In light of $\int_X e^{\frac{\p u}{\p t}+u}\Omega=[\omega_t]^n\thicksim e^{-kt}$, we conclude   
$$\max_{X\times\{t\}}\(\frac{\p u}{\p t}\)\geqslant -C,$$
which is stronger than the conclusion of Corollary \ref{cor:derivative-seq-lower} which is for the general case.  

\end{itemize}

With these assumptions, the controls we have on $u$ and $\frac{\p u}{\p t}$ are still weaker comparing with the model case of $[L]$ being semi-ample. 

\begin{acknowledgment}

The author would like to thank Professor Gang Tian for introducing him to this wonderful field of research and constant encouragement. The work was partially carried out during the visit to the School of Mathematical Sciences at Peking University and Beijing International Center for Mathematical Research, as part of the sabbatical leave from the University of Sydney. The author would like to thank Professor Xiaohua Zhu and all institutes for this wonderful opportunity.

\end{acknowledgment}

{\it 
\noindent Zhou Zhang \\ 
\noindent Address: Carslaw Building F07, the School of Mathematics and Statistics \\
the University of Sydney, NSW 2006, Australia \\
\noindent Email: zhangou@maths.usyd.edu.au \\
\noindent Fax: + 61 2 9351 4534}

\end{document}